\documentclass[a4paper,12pt]{amsart}
\usepackage[utf8]{inputenc}
\usepackage[T1]{fontenc}
\usepackage{amsthm}
\usepackage{amssymb}
\usepackage{listings}

\title{Ley de Reciprocidad Cuadrática y aplicaciones}
\author{Mario Pérez Maletzki}
\date{2021}

\newtheorem{teo}{Teorema}[section]
\newtheorem{prop}[teo]{Proposición}
\newtheorem{defi}[teo]{Definición}
\newtheorem{cor}[teo]{Corolario}
\newtheorem{ej}[teo]{Ejemplos.}
\newtheorem{lema}[teo]{Lema}

\begin{document}
\maketitle	
	\begin{abstract}
		El objetivo de este trabajo es introducir todos los conceptos y resultados necesarios para finalmente poder dar una demostración rigurosa de la Ley de Reciprocidad Cuadrática y ver como podemos aplicarla para obtener resultados de teoría de números que distan mucho de ser triviales tales como el problema de los dos cuadrados, el problema de determinar cuándo una ecuación en congruencias de segundo grado tiene solución y otros. 
		
		Para motivar algunos resultados y estimular la intuición sobre ellos incluimos ejemplos prácticos usando el programa GAP.
	\end{abstract}

\section{Introducción}

El desarrollo y descubrimiento de la Ley de Reciprocidad Cuadrática fue muy lento e involucró a muchos matemáticos tales como Gauss, Euler y Legendre entre otros. Sus orígenes se remontan a cuestiones sobre ecuaciones diofánticas y el problema que escribió Fermat a Mersenne en una carta en la cual afirmaba:

 \emph{“Todo número primo, que supere por una unidad un múltiplo de 4, es una única vez la suma de cuadrados, y es una única vez la hipotenusa de
un triángulo rectángulo”} 

Fermat, como era costumbre en él, no dio una demostración de este enunciado y hubo que esperar a Euler para que la proporcionara pasados unos años. 

Euler se interesó pues en la teoría de números, y como veremos formuló un criterio muy útil para determinar cuando un entero era un residuo cuadrático módulo un primo dado y conjeturó enunciados muy similares al que afirma la Ley de Reciprocidad Cuadrática, pero no fue hasta que llegó Gauss, quien a sus 19 años enunció y probó este teorema. Mientras Euler se preguntaba si dado un número como módulo, otro número era residuo cuadrático de éste, Gauss planteó el problema inverso: dado un número entero, ¿sobre qué enteros es este un residuo cuadrático?
Es por esto que se adoptó el nombre de Ley de Reciprocidad Cuadrática, al cual Gauss denominó el \emph{Theorema Aureum}. Gauss dio $8$ demostraciones distintas de este teorema a lo largo de su vida, y a día de hoy este es uno de los teoremas con más demostraciones distintas de la historia de las Matemáticas.

\section{Ley de Reciprocidad Cuadrática}

Comenzamos definiendo qué es un residuo cuadrático.

\begin{defi}[Residuo Cuadrático]
	Sean $m\in \mathbb{N}$ mayor o igual que 2 y  $n$ un entero coprimo con $m$. Diremos que $m$ es un residuo cuadrático módulo $m$ si tiene solución la siguiente congruencia: $$ x^2\equiv n \ (mod \ m).$$
\end{defi}

De la definición es claro que si $n'$ es otro entero tal que $n\equiv n' \ (mod\ m)$, entonces $n$ será un residuo cuadrático módulo $m$ si y sólo si lo es $n'$.

\underline{Observación:} Si $x$ es una solución de la anterior ecuación, cualquier $y$ congruente con $x$ módulo $m$ también lo será, pues si $x\equiv y \ (mod\ m) \Rightarrow x^2\equiv y^2 \ (mod\ m) $.

Esta observación es muy útil, pues nos indica que debemos buscar las soluciones en el conjunto $\{1,\ldots, m-1\}$.
\begin{prop}
	Dado un entero positivo $m$, en el conjunto $\smash{\{0,1,\ldots, m-1\}}$ pueden haber como máximo $\frac{m}{2}+1$ residuos cuadráticos si $m$ es par y $\frac{m+1}{2}$ si $m$ es impar.
\end{prop}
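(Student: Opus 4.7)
The plan is to count the distinct values taken by the squaring map $x \mapsto x^2 \bmod m$ on $\{0,1,\ldots,m-1\}$. By the observation preceding the statement, every quadratic residue in $\{0,1,\ldots,m-1\}$ arises this way, so an upper bound on the size of the image gives an upper bound on the number of quadratic residues. The key identity driving the count is
$$ (m-x)^2 = m^2 - 2mx + x^2 \equiv x^2 \pmod{m}, $$
which shows that $x$ and $m-x$ always produce the same residue. I would use this to partition $\{0,1,\ldots,m-1\}$ into orbits under the involution $x\mapsto m-x$ and count the orbits.

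For the case $m$ odd, the involution $x\mapsto m-x$ has a unique fixed point on $\{0,\ldots,m-1\}$, namely $x=0$ (since $2x\equiv 0\pmod m$ forces $x=0$ when $m$ is odd). Thus the remaining $m-1$ elements are grouped into $\tfrac{m-1}{2}$ pairs, each contributing at most one residue, plus the singleton $\{0\}$, giving at most $\tfrac{m-1}{2}+1=\tfrac{m+1}{2}$ distinct residues.

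For the case $m$ even, the involution has two fixed points, $x=0$ and $x=m/2$, and the remaining $m-2$ elements form $\tfrac{m-2}{2}=\tfrac{m}{2}-1$ pairs. This yields at most $\tfrac{m}{2}-1+2=\tfrac{m}{2}+1$ distinct residues, as claimed.

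The argument is almost bookkeeping, so the only subtlety I expect to have to be careful about is correctly identifying the fixed points of $x\mapsto m-x$ on $\{0,\ldots,m-1\}$ in each parity case, since a miscount there directly translates to an off-by-one error in the bound. No result beyond the observation already made in the excerpt is needed.
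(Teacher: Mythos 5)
Your proof is correct and is essentially the same argument as the paper's: both rest on the identity $(m-x)^2\equiv x^2\ (mod\ m)$ and count how many classes survive the pairing $x\leftrightarrow m-x$, with $0$ (and $m/2$ when $m$ is even) as the unpaired elements. Your phrasing in terms of fixed points of the involution is just a cleaner bookkeeping of the same pairing the paper writes out explicitly.
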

\begin{proof}[Demostración]
Primero observamos que $0$ es trivialmente un residuo cuadrático (basta tomar $x=m^2$ por ejemplo). 

Estudiamos de entre los números $\{1^2,2^2,\ldots (m-1)^2\}$ cuántos son congruentes entre sí módulo $m$.

Supongamos que $m$ es impar. En este caso $m-1$ es par, y como además $(m-1)^2\equiv 1^2\ (mod\ m)$, $(m-2)^2\equiv 2^2\ (mod\ m)$, ... concluimos que como máximo pueden haber $1+\frac{m-1}{2}=\frac{m+1}{2}$ residuos cuadráticos distintos.
Si $m$ es par, de forma análoga tenemos que $(m-1)^2\equiv 1^2\ (mod\ m)$, $\smash{(m-2)^2\equiv 2^2\ (mod\ m)}$, ...,$(\frac{m}{2}+1)^2\equiv (\frac{m}{2}-1)^2\ (mod\ m)$ y obtenemos que pueden haber como máximo $1+\frac{m-2}{2} + 1= \frac{m}{2}+1$ residuos cuadráticos.
\end{proof}

\underline{Nota:} De ahora en adelante supondremos que el módulo es tal que $m>2$, pues es un caso sin interés y que rompe con la simetría de algunos teoremas. 

Cuando tratamos con congruencias módulo un número primo podemos refinar el anterior resultado:

\begin{teo}
	Sea $p$ un número primo. Entonces exactamente la mitad de los elementos de $\{1,2,\ldots (p-1)\}$ son residuos cuadráticos módulo $p$.
\end{teo}
\begin{proof}[Demostración]
	Vamos a probar que ningún par de los números $1^2,2^2,\ldots (\frac{p-1}{2})^2$ pueden ser congruentes entre sí módulo $p$, y por tanto, todos ellos serás residuos cuadráticos distintos módulo $p$ y como por la proposición anterior no pueden haber más de $\frac{p-1}{2}$, habremos terminado.
	
	Si $1\leq x<y\leq \frac{p-1}{2}$ son tales que $x^2\equiv y^2 \ (mod\ p) $ entonces $\smash{p| (x-y)(x+y)}$ pero como $x+y<p$, esto sólo puede ser posible si $x=y$. 
\end{proof}

\begin{ej}

\begin{lstlisting}

[language=GAP]
gap> Z13:= Integers mod 13;
GF(13)
gap> List(Elements(Z13),x_>Int(x^2));
[ 0, 1, 4, 3, 12, 9, 10, 1, 4, 3, 12, 9, 10 ]


\end{lstlisting}

Observamos que hay $6$ residuos cuadráticos módulo $13$ entre $\{1,2,\ldots 12\}$, lo cual sabíamos que debía ocurrir por ser $13$ primo.

\begin{lstlisting}[language=GAP]
gap> Z15:= Integers mod 15;

(Integers mod 15)

gap> List( Elements (Z15) , x _> Int(x^2)  );

[ 0, 1, 4, 9, 1, 10, 6, 4, 4, 6, 10, 1, 9, 4, 1 ]

gap>
\end{lstlisting}

En este caso (siendo $15$ un número compuesto) nos encontramos con que sólo hay $5$ residuos cuadráticos módulo $15$ en $\{1,2,\ldots 14\}$.

\end{ej}

\begin{teo}\label{prod-residuos}
	Si  $p$ es un número primo y $a$ y $b$ son enteros coprimos con $p$, entonces si ambos son residuos cuadráticos módulo $p$, su producto $a\cdot b$ también es un residuo cuadrático módulo $p$. Si uno de ellos lo es pero el otro no entonces su producto tampoco lo es, y si ninguno de ellos lo es su producto sí lo es. 
\end{teo}
\begin{proof}[Demostración]
	Si ambos lo son, deben existir enteros $x$ e $y$ tales que $$x^2\equiv a \ (mod \ p)$$ $$y^2\equiv b \ (mod \ p)$$ y por tanto tenemos que $$(xy)^2\equiv x^2\cdot y^2\equiv a\cdot b \ (mod \ p)$$ de lo cual concluimos que $ab$ es un residuo cuadrático módulo $p$.
	
	Supongamos que $a$ lo es pero $b$ no. Por serlo $a$, existirá un $x$ entero tal que $x^2\equiv a \ (mod \ p)$, y si $ab$ también lo fuera existiría otro entero $z$ tal que $z^2\equiv ab \ (mod \ p)$, pero entonces, teniendo en cuenta que por ser $x$ coprimo con $p$ tiene inverso en $\mathbb{Z}_p$, tendríamos que $$z^2\equiv ab\equiv x^2b \ (mod \ p)$$ y por tanto que $$(z\cdot x^{-1})^2\equiv z^2\cdot (x^{-1})^2\equiv b \ (mod \ p)$$ lo cual es una contradicción, pues habíamos supuesto que $b$ no es residuo cuadrático.
	
	Finalmente si ni $a$ ni $b$ son residuos cuadráticos módulo $p$, multiplicamos $a$ por cada elemento de $\{1,\ldots,p-1\}$ que sí sea residuo cuadrático y obtenemos un total de $\frac{p-1}{2}$ no-residuos cuadráticos; pues en dicho conjunto sabemos que hay $\frac{p-1}{2}$ residuos cuadráticos y que al multiplicarlos por el no-residuo $a$ obtenemos un no-residuo, y si $a\cdot x\equiv a\cdot y\ (mod\ p)$ entonces multiplicando en ambos lados por el inverso de $a$ en $\mathbb{Z}_p$ llegamos a que $x\equiv  y\ (mod\ p)$. Por tanto como hay justamente $\frac{p-1}{2}$ no-residuos cuadráticos, necesariamente $a\cdot b$ tiene que ser un residuo cuadrático.
	
\end{proof}

\begin{defi}[Símbolo de Legendre]
	Para cada número primo impar $p$ y cada entero $n$ primo con $p$, definimos el símbolo de Legendre de $n$ respecto de p como $$\left( \frac{n}{p}\right):=\left\{
	\begin{array}{cl}
	1&\mbox{ si n es un resíduo cuadrático módulo p},\\
	-1&\mbox{si n no es resíduo cuadrático módulo p}. 
	\end{array}\right.
	$$
\end{defi}
\begin{teo}[Teorema de Wilson]
Sea $p$ un entero mayor que 1. Entonces $p$ es primo si y sólo si $$(p-1)!\equiv -1\ (mod\ p).$$

\end{teo}
\begin{proof}[Demostración]
	($\Rightarrow$) Supongamos que $p$ es un número primo. Sabemos que entonces $\mathbb{Z}_p$ es cuerpo, y en particular que todo elemento no nulo tiene inverso respecto a la multiplicación (y que es único). Además, sólo hay dos números que sean inversos de sí mismos: en efecto, supongamos que $1\leq x\leq p-1$ es tal que $x\equiv x^{-1}\ (mod \ p)$ o lo que es lo mismo, que $x^2\equiv 1 \ (mod\ p)$. Entonces, $(x-1)(x+1)$ es divisible por $p$, y como $p$ es primo esto sólo puede ocurrir si bien $x=1$ o bien $x=p-1$.
	Por tanto, si multiplicamos todos los elementos de $\{1,2,\ldots,p-1\}$, agrupando dos a dos cada uno con su inverso tenemos que $$(p-1)!\equiv 1\cdot 1\cdot\ldots \cdot 1 \cdot (p-1)\equiv (p-1)\equiv -1 \ (mod\ p).$$
	
	($\Leftarrow$) Supongamos que $p$ es un entero que cumple que $(p-1)!\equiv -1\ (mod\ p)$. Supongamos que $p$ no es primo y sea $1<c<p$ un divisor propio de $p$. Obviamente $c$ divide a $(p-1)!$, y como por hipótesis $(p-1)!\equiv -1\ (mod\ p)$ esto quiere decir que $p$ divide a $(p-1)!+1$ y al ser $c$ es un divisor de $p$, $c$ será también un divisor de $(p-1)!+1$. Pero esto es imposible, pues el único entero que divide a dos números consecutivos es el 1 y habíamos supuesto que $1<c<p$.
\end{proof}
\begin{teo}[Criterio de Euler]
Sea $p$ un primo impar y $a$ un entero coprimo con $p$. Entonces $$a^{\frac{p-1}{2}}\equiv \left\{
\begin{array}{cl}
1\ (mod\ p)&\mbox{ si n es un resíduo cuadrático módulo p},\\
-1\ (mod\ p)&\mbox{si n no es resíduo cuadrático módulo p}. 
\end{array}\right.
$$
\end{teo}
\begin{proof}[Demostración]
	Consideramos los pares de elementos $(x,y)$ tales que $\smash{x \leq y \leq p-1}$ y
	$x\cdot y \equiv a \ (mod\ p)$. Observamos que, para cada $x$, existe un único
	$y\in\mathbb{Z}_p$ tal que $x\cdot y \equiv a \ (mod\ p)$, pues al ser $p$ primo y $x$ coprimo con $p$, el inverso de $x$ en $\mathbb{Z}_p$ existe, y $x\cdot(x^{-1}a)\equiv a\ (mod\ p)$. Además si también 	$x\cdot z \equiv a \ (mod\ p)$ entonces $x\cdot y \equiv x\cdot z \ (mod\ p)$ y multiplicando por el inverso de $x$ en ambos lados llegaríamos a que $y \equiv z \ (mod\ p)$ (y al ser $1\leq y,z\leq p-1$ esto sólo puede ocurrir si $y=z$).
	
	Distinguimos pues dos casos. Si $a$ no es residuo cuadrático, los elementos que forman cada uno de los pares posibles tienen que ser necesariamente distintos entre sí, y como hay $p-1$ elementos habrán $\frac{p-1}{2}$ pares. Si los multiplicamos todos ellos, por el teorema de Wilson obtenemos que $$(p-1)!\equiv a^{\frac{p-1}{2}}\equiv -1\ (mod\ p).$$
	
	En el caso en que $a$ sí sea un residuo cuadrático, habrán exactamente dos enteros distintos, que representaremos por $\sqrt{a}$ y $\sqrt{-a}$, que sean solución de la ecuación $$x^2\equiv a \ (mod \ p).$$ 
	Por tanto habrán $\frac{p-3}{2}$ pares formados por elementos distintos y dos pares que son $(\sqrt{a},\sqrt{a})$ y $(\sqrt{-a},\sqrt{-a})$.
	
	Multiplicamos ahora todos los elementos de todos los pares obtenemos que $$ a^{\frac{p+1}{2}}\equiv (p-1)!\cdot \sqrt{a}\cdot\sqrt{-a}\equiv (-1)\cdot (-a)\equiv a\ (mod\ p)$$
	
	y multiplicando en ambos lados por $a^{-1}$ obtenemos que $$ a^{\frac{p-1}{2}}\equiv 1\ (mod\ p).$$
\end{proof}

\underline{Observación:} Con el criterio de Euler podemos demostrar el teorema (\ref{prod-residuos}) de forma más directa teniendo en cuenta que $$(ab)^{\frac{p-1}{2}}=a^{\frac{p-1}{2}}\cdot b^{\frac{p-1}{2}}$$ y que por tanto $ab$ es residuo cuadrático si y sólo si $(ab)^{\frac{p-1}{2}} \equiv1\ (mod\ p)$ si y sólo si, o bien $$a^{\frac{p-1}{2}} \equiv1\ (mod\ p)\ \mbox{  y  }\ b^{\frac{p-1}{2}} \equiv1\ (mod\ p)$$ o bien  $$a^{\frac{p-1}{2}} \equiv-1\ (mod\ p)\ \mbox{ y }\ b^{\frac{p-1}{2}} \equiv-1\ (mod\ p).$$

El siguiente corolario será de vital importancia cuando en (\ref{primo}) caractericemos qué números primos pueden expresarse como suma de dos números cuadrados.

\begin{cor}\label{-1 res}
	Si $p$ es un primo impar $$\left( \frac{-1}{p}\right)=\left\{
	\begin{array}{cl}
	1&\mbox{ si } p\equiv 1\ (mod\ 4)\\
	-1&\mbox{ si } p\equiv 3\ (mod\ 4)
	\end{array}\right.
	$$
\end{cor}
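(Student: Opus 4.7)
El plan es aplicar el Criterio de Euler al caso particular $a=-1$. Según dicho criterio, $\left(\frac{-1}{p}\right)\equiv (-1)^{\frac{p-1}{2}}\ (mod\ p)$, y por tanto el problema se reduce enteramente a estudiar la paridad del exponente $\frac{p-1}{2}$.

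Antes de entrar en casos conviene justificar un pequeño detalle: como los dos lados de la congruencia sólo pueden tomar los valores $\pm 1$, y como $p$ es un primo impar (en particular $p>2$), una congruencia módulo $p$ entre $1$ y $-1$ fuerza la igualdad en $\mathbb{Z}$. Así que basta con decidir el signo de $(-1)^{\frac{p-1}{2}}$.

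Para calcular dicho signo distinguiría los dos casos según la clase de $p$ módulo $4$, que son los únicos posibles al ser $p$ impar. Si $p\equiv 1\ (mod\ 4)$ entonces $p-1$ es múltiplo de $4$ y por tanto $\frac{p-1}{2}$ es par, luego $(-1)^{\frac{p-1}{2}}=1$. Si por el contrario $p\equiv 3\ (mod\ 4)$, entonces $p-1\equiv 2\ (mod\ 4)$ y $\frac{p-1}{2}$ es impar, con lo que $(-1)^{\frac{p-1}{2}}=-1$. Combinando ambos casos con la observación anterior se obtiene exactamente la fórmula del enunciado.

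No preveo ninguna dificultad real: el corolario es prácticamente una lectura directa del Criterio de Euler, y el único punto que merece explicitarse es el paso de la congruencia a la igualdad, que depende únicamente de que $p>2$.
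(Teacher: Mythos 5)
Tu propuesta es correcta y sigue esencialmente el mismo camino que la demostración del trabajo: aplicar el Criterio de Euler con $a=-1$ y estudiar la paridad de $\frac{p-1}{2}$ según la clase de $p$ módulo $4$. El único añadido es que explicitas el paso de la congruencia módulo $p$ a la igualdad entre $\pm 1$ (usando $p>2$), detalle que el texto omite pero que es valioso dejar escrito.
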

\begin{proof}[Demostración]
	
	En efecto, por el criterio de Euler $\left( \frac{-1}{p}\right)=(-1)^{\frac{p-1}{2}}$ y ahora basta con tener en cuenta que $\frac{p-1}{2}$ es par si $p\equiv 1\ (mod\ 4)$ e impar si $\smash{p\equiv 3\ (mod\ 4)}$.
	
\end{proof}
\begin{lema}[Lema de Gauss]\label{lema-gauss}
	Sean $p$ un primo impar y $n$ un entero coprimo con $p$. Definimos el conjunto $$S:=\{n,2n,\ldots \frac{p-1}{2}n \}$$ y $S'$ como los representantes de las clases de equivalencia de los elementos de $S$ en $\mathbb{Z}_p$. 
	
	Si denotamos por $k$ el número de elementos de $S'$ que son mayores que $p/2$ entonces $$\left( \frac{n}{p}\right)=(-1)^k.$$
\end{lema}
\begin{proof}[Demostración]
	Primero observamos que en $S$ no hay ningún múltiplo de $p$, y que en $S'$ no hay ningún par de elementos congruentes entre sí módulo $p$, pues si $nx\equiv ny \ (mod \ p)$ multiplicamos en ambos lados por el inverso de $n$ en $\mathbb{Z}_p$ y obtendremos que $x\equiv y \ (mod \ p)$, lo cual, siendo $1\leq x,y\leq \frac{p-1}{2}$ sólo es posible si $x=y$. Por tanto en $S'$ hay exactamente $\frac{p-1}{2}$ elementos.
	
	Si denotamos por $r_1,\ldots,r_l$ a los elementos de $S'$ menores que $\frac{p}{2}$ y por $s_1,\ldots,s_k$ a los mayores que $\frac{p}{2}$, se tiene que $$\{1,2\ldots,\frac{p-1}{2}\}=\{r_1,\ldots,r_l,p-s_1,\ldots,p-s_k\}$$
	
	Para probar la anterior igualdad, observamos que $1\leq r_i, p-s_j\leq \frac{p-1}{2}$ y que por tanto basta con comprobar que todos ellos son incongruentes entre sí. Ya hemos visto que si $i\neq j$, $r_i$ no puede ser congruente con $r_j$; análogamente $s_i$ no puede ser congruente con $s_j$ y por tanto $p-s_i$ no puede ser congruente con $p-s_j$. Si existieran elementos tales que $r_i\equiv p-s_j$, llegaríamos a que $p$ divide a un número de la forma $n(x+y)$ con $1\leq x,y\leq \frac{p-1}{2}$, lo cual es imposible.
	
	Ahora, si multiplicamos todos los elementos de cada conjunto llegamos a que $$(\frac{p-1}{2})!\equiv \prod_{i=1}^{l}r_i \prod_{j=1}^{k}(p-s_j)\equiv (-1)^k \prod_{i=1}^{l}r_i \prod_{j=1}^{k}s_j\ (mod\ p).$$
	
	Por otro lado, como los elementos de $S$ son congruentes uno a uno con los de $S'$, multiplicándolos todos entre sí obtenemos que $$n^{\frac{p-1}{2}}(\frac{p-1}{2})!\equiv \prod_{i=1}^{l}r_i \prod_{j=1}^{k}s_j\ (mod\ p)$$ y juntando todo, después de multiplicar por el inverso de $(\frac{p-1}{2})!$ (el cual existe por ser coprimo con $p$) concluimos que $$n^{\frac{p-1}{2}}(-1)^k\equiv 1\ (mod\ p)$$ o equivalentemente $$n^{\frac{p-1}{2}}\equiv (-1)^k\ (mod\ p).$$
	El resultado se sigue ahora de aplicar el criterio de Euler.
\end{proof}
\begin{cor}
	Si $p$ es un primo impar entonces $$\left( \frac{2}{p}\right)=(-1)^{\frac{p^2-1}{8}}.$$
\end{cor}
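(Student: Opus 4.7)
\medskip

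The plan is to apply the Lema de Gauss (\ref{lema-gauss}) with $n=2$. In this case the set $S=\{2,4,\ldots,p-1\}$ already lies inside $\{1,\ldots,p-1\}$, so $S'=S$ and no reduction modulo $p$ is needed. Thus $k$ is simply the number of even integers $2j$ with $1\le j\le\frac{p-1}{2}$ that exceed $p/2$, i.e. the number of $j$ with $p/4<j\le\frac{p-1}{2}$. This gives the closed form
\[
k=\frac{p-1}{2}-\left\lfloor\frac{p}{4}\right\rfloor,
\]
and by the lemma $\left(\frac{2}{p}\right)=(-1)^k$.

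The remaining work is purely arithmetic: I need to check that $k$ and $\frac{p^2-1}{8}$ have the same parity for every odd prime $p$. The cleanest way is a case analysis on the residue of $p$ modulo $8$, writing $p=8m+r$ with $r\in\{1,3,5,7\}$ and computing both $\frac{p-1}{2}-\lfloor p/4\rfloor$ and $\frac{p^2-1}{8}$ in each case. One obtains that both quantities are even precisely when $p\equiv\pm 1\ (mod\ 8)$ and odd precisely when $p\equiv\pm 3\ (mod\ 8)$, so the exponents agree mod $2$ and the two signs $(-1)^k$ and $(-1)^{(p^2-1)/8}$ coincide.

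I do not foresee any real obstacle: once Gauss's Lemma is invoked, everything reduces to a routine counting argument and a four-case verification. The only minor subtlety is justifying that $p/4$ is never an integer (true because $p$ is an odd prime), so the floor function is well behaved and the count $k$ is given by the formula above without edge cases. A mildly slicker alternative would be to observe that $\frac{p^2-1}{8}=\frac{p-1}{2}\cdot\frac{p+1}{4}$ when $p\equiv 3\ (mod\ 4)$ and $\frac{p^2-1}{8}=\frac{p-1}{4}\cdot\frac{p+1}{2}$ when $p\equiv 1\ (mod\ 4)$, and compare parities directly against $k$; but the case analysis mod $8$ is the most transparent route and is what I would write out.
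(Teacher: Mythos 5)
Your proposal is correct and follows essentially the same route as the paper: apply the Lema de Gauss with $n=2$, observe that $S'=S=\{2,4,\ldots,p-1\}$ so that $k=\frac{p-1}{2}-\left[\frac{p}{4}\right]$, and verify by a case analysis on $p$ modulo $8$ that this exponent has the same parity as $\frac{p^2-1}{8}$. No discrepancies to report.
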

\begin{proof}[Demostración]
	Usaremos el lema de Gauss y para ello bastará con estudiar la paridad del conjunto de números mayores que $p/2$ entre el conjunto $\{2,4,\ldots,p-1\}$. 
	Como hay $[\frac{p}{4}]$ números pares menores que $\frac{p}{2}$, habrán $\frac{p-1}{2}-[\frac{p}{4}]$ mayores.
	
	Distinguiremos 4 casos posibles: 
	\begin{enumerate}
		\item Si $p\equiv 1\ (mod \ 8)$, entonces $\frac{p-1}{2}-[\frac{p}{4}]$ será de la forma $4n+2n$ para algún entero $n$. Por tanto $k$ será par.
		\item Si $p\equiv 3\ (mod \ 8)$, entonces $\frac{p-1}{2}-[\frac{p}{4}]$ será de la forma $4n+1+2n$ para algún entero $n$. Por tanto $k$ será impar.
		\item Si $p\equiv 5\ (mod \ 8)$, entonces $\frac{p-1}{2}-[\frac{p}{4}]$ será de la forma $4n+2+2n+1$ para algún entero $n$. Por tanto $k$ será impar.
		\item Si $p\equiv 7\ (mod \ 8)$, entonces $\frac{p-1}{2}-[\frac{p}{4}]$ será de la forma $4n+3+2n+1$ para algún entero $n$. Por tanto $k$ será par. 
	\end{enumerate}

Todo esto lo expresamos de una forma más compacta diciendo que $$\left( \frac{2}{p}\right)=(-1)^{\frac{p^2-1}{8}}$$

pues $\frac{p^2-1}{8}$ es par si $p\equiv 1,7\ (mod \ 8)$ y es impar si $p\equiv 3,5\ (mod \ 8)$.
\end{proof}

\begin{ej}

	\begin{lstlisting}[language=GAP]
	
	
	gap> IsPrime(101);
	true
	gap> 101 mod 8;
	5
	gap> 2^50 mod 101;
	100
	
	\end{lstlisting}

	\begin{lstlisting}[language=GAP]

	gap> IsPrime(41);
	true
	gap> 41 mod 8;
	1
	gap> 2^20 mod 41;
	1
	

	\end{lstlisting}
\end{ej}

Antes de probar el resultado principal, veamos unos ejemplos sugerentes:

\begin{ej}

	\begin{lstlisting}[language=GAP]
	
	
	gap> IsPrime(911);
	true
	gap> 911 mod 4;
	3
	gap> IsPrime(919);
	true
	gap> 919 mod 4;
	3
	gap> IsPrime(929);
	true
	gap> 929 mod 4;
	1
	gap> IsPrime(937);
	true
	gap> 937 mod 4;
	1
	
	gap> 911^459 mod 919;
	918
	
	gap> 919^455 mod 911;
	1
	
	\end{lstlisting}

	Observamos que $911$ no es residuo cuadrático módulo $919$, pero $919$ sí que es residuo cuadrático módulo $911$.

	\begin{lstlisting}[language=GAP]
	
	
	gap> 929^455 mod 911;
	1
	gap> 911^464 mod 929;
	1
	
	
	
	\end{lstlisting}
	
	En este caso tenemos que $929$ es residuo cuadrático módulo $911$ y también $911$ es residuo cuadrático módulo $929$.
	
	\begin{lstlisting}[language=GAP]
	
	gap> 929^459 mod 919;
	1
	gap> 919^464 mod 929;
	1
	
	
	
	
	\end{lstlisting}
	
	Otra vez tenemos que $929$ es residuo cuadrático módulo $919$ y también $919$ es residuo cuadrático módulo $929$.
	
\end{ej}

Vayamos pues con el teorema fundamental:

\begin{teo}[\textbf{Ley de Reciprocidad Cuadrática}]
	Si $p$ y $q$ son números primos (impares) distintos se tiene que $$\left( \frac{p}{q}\right)\left( \frac{q}{p}\right)=(-1)^{\frac{(p-1)(q-1)}{4}}.$$
\end{teo}
\begin{proof}[Demostración]
	Denotamos por $[\frac{n}{p}]$ a la parte entera de $\frac{n}{p}$, es decir, el cociente de la división entera de $n$ entre $p$.
	
	Primero vamos a probar que si $n$ es un número impar coprimo con $p$ y denotamos por $\rho:=\sum_{j=1}^{\frac{p-1}{2}}\left[\frac{jn}{p}\right]$, entonces  $$\left( \frac{n}{p}\right)=(-1)^\rho.$$
	
	Definimos $S'$ como en (\ref{lema-gauss}) y volvemos a denotar por $r_1,\ldots,r_l$ a los elementos de $S'$ menores que $\frac{p}{2}$ y por $s_1,\ldots,s_k$ a los mayores que $\frac{p}{2}$. Para cada $j$ está claro que $jn=\left[\frac{jn}{p}\right]p+t$ para cierto $t\in S'$ y por tanto $$\sum_{j=1}^{\frac{p-1}{2}}jn= \sum_{j=1}^{\frac{p-1}{2}}\left[\frac{jn}{p}\right]p + \sum_{j=1}^{l}r_j +  \sum_{j=1}^{k}s_j.$$ Por otra parte, como ya probamos en (\ref{lema-gauss}) se tiene que $$\{1,2\ldots,\frac{p-1}{2}\}=\{r_1,\ldots,r_l,p-s_1,\ldots,p-s_k\}$$ y por tanto $$\sum_{j=1}^{\frac{p-1}{2}}j= \sum_{j=1}^{l}r_j +  \sum_{j=1}^{k}(p-s_j)= \sum_{j=1}^{l}r_j + kp - \sum_{j=1}^{k}s_j.$$ y restando estas dos expresiones obtenemos que  $$(n-1)\sum_{j=1}^{\frac{p-1}{2}}j=p\left(\sum_{j=1}^{\frac{p-1}{2}}\left[\frac{jn}{p}\right]-k\right) + 2\left(\sum_{j=1}^{k}s_j\right).$$
	
	Observamos que $k$ y $\rho=\sum_{j=1}^{\frac{p-1}{2}}\left[\frac{jn}{p}\right]$ tienen que tener la misma paridad, pues al ser $n$ impar la expresión de la izquierda es par y por ser $p$ un primo impar  $p\left(\sum_{j=1}^{\frac{p-1}{2}}\left[\frac{jn}{p}\right]-k\right)$ es par si y sólo si $k$ y $\rho=\sum_{j=1}^{\frac{p-1}{2}}\left[\frac{jn}{p}\right]$ tienen la misma paridad. 
	
	Por tanto, por el Lema de Gauss (\ref{lema-gauss}) tenemos que $$\left( \frac{n}{p}\right)=(-1)^k=(-1)^\rho.$$ 
	
	A continuación vamos a probar que $$\sum_{j=1}^{\frac{p-1}{2}}\left[\frac{jq}{p}\right]+\sum_{j=1}^{\frac{q-1}{2}}\left[\frac{jp}{q}\right]=\frac{(p-1)(q-1)}{4}$$ lo cual terminará la prueba pues entonces tendremos que $$\left( \frac{p}{q}\right)\left( \frac{q}{p}\right)=(-1)^{\sum_{j=1}^{\frac{p-1}{2}}\left[\frac{jq}{p}\right]}(-1)^{\sum_{j=1}^{\frac{q-1}{2}}\left[\frac{jp}{q}\right]}=(-1)^{\sum_{j=1}^{\frac{p-1}{2}}\left[\frac{jq}{p}\right]+\sum_{j=1}^{\frac{q-1}{2}}\left[\frac{jp}{q}\right]}=(-1)^{\frac{(p-1)(q-1)}{4}}.$$
	
	Supongamos sin pérdida de generalidad que $q<p$ y calculemos la suma $$\rho=\sum_{j=1}^{\frac{p-1}{2}}\left[\frac{jq}{p}\right].$$
	
	Para $j=1$ tenemos que $[\frac{jq}{p}]=0$ y para $j=\frac{p-1}{2}$ tenemos que $$\left[\frac{\frac{p-1}{2}q}{p}\right]=\left[\frac{\frac{q-1}{2}p+\frac{p-q}{2}}{p}\right]=\frac{q-1}{2}$$ y por tanto todos los sumandos toman valores de forma creciente entre $0$ y $\frac{q-1}{2}$. Teniendo en cuenta que por ser $p>q\rightarrow \frac{p-1}{2}\geq\frac{q+1}{2} $ y como además los términos $\frac{jq}p$ están igualmente espaciados, tenemos que para cada $n$ tal que $0\leq n\leq \frac{q-1}{2}$ habrá algún sumando que tome ese valor y para calcular $\rho$ sólo nos hará falta ver cuántos sumandos hay que tomen el mismo valor para cada $n$.
	
	 Para ver esto, si consideramos dos sumandos consecutivos de forma que $$\left[\frac{jq}{p}\right]=n-1\quad \mbox{ y }\quad \left[\frac{(j+1)q}{p}\right]=n$$ se tiene que $$\frac{jq}{p}<n<\frac{(j+1)q}{p} \rightarrow j<\frac{np}{q}<j+1\rightarrow \left[\frac{np}{q}\right]=j.$$ 
	 
	 Por tanto el número exacto de sumando en $\rho$ que toman el valor $n$ será $$\left[\frac{(n+1)p}{q}\right]-\left[\frac{np}{q}\right] $$ y así $$\sum_{j=1}^{\frac{p-1}{2}}\left[\frac{jq}{p}\right]= 1\left(\left[\frac{2p}{q}\right]-\left[\frac{p}{q}\right]\right)+2\left(\left[\frac{3p}{q}\right]-\left[\frac{2p}{q}\right]\right)+$$$$+\ldots+\frac{q-1}{2}\left(\frac{p-1}{2}-\left[\frac{\frac{q-1}{2}p}{q}\right]\right)=-\sum_{k=1}^{\frac{q-1}{2}}\left[\frac{kp}{q}\right]+ \frac{(p-1)(q-1)}{4}$$ y así tendremos que $$\sum_{j=1}^{\frac{p-1}{2}}\left[\frac{jq}{p}\right]+\sum_{k=1}^{\frac{q-1}{2}}\left[\frac{kp}{q}\right]= \frac{(p-1)(q-1)}{4}.$$
\end{proof}

Esta es la forma moderna en la que se enuncia la Ley de Reciprocidad Cuadrática, pero la manera original (obviamente equivalente) de Gauss fue la siguiente:

\emph{Sean $p$ y $q$ dos números primos impares distintos entre sí. Entonces: 
\begin{enumerate}
	\item Si $ p\equiv 1\ (mod\ 4)$, $q$ es un residuo cuadrático módulo $p$ si y sólo si $p$ es un residuo cuadrático módulo $q$.
	\item Si $ p\equiv 3\ (mod\ 4)$, $q$ es un residuo cuadrático módulo $p$ si y sólo si $-p$ es un residuo cuadrático módulo $q$.
\end{enumerate}}

El Criterio de Euler es excelente a nivel teórico, pero computacionalmente no es muy óptimo ya que para determinar si un número es un residuo cuadrático módulo un número primo que sea "grande" tenemos que calcular potencias de un orden muy alto.

 Por ejemplo, para determinar si $19$ es un residuo cuadrático módulo $859$, tendríamos que calcular $$19^{429}\ ( mod\ 859).$$

Sin embargo, con la Ley de Reciprocidad Cuadrática, puesto que tanto $19$ como $859$ con congruentes con $3$ módulo $4$, podríamos determinar si $19$ es un residuo cuadrático módulo $859$ de forma mucho más sencilla; basta con ver si $859$ es un residuo cuadrático módulo $19$, y como $859\equiv 4\ (mod\ 19)$ el problema se reduce a saber si $4$ es un residuo cuadrático módulo $19$ y aquí el Criterio de Euler es más eficiente al calcular $$4^9\ (mod\ 19)=1.$$

Por tanto, podemos afirmar que $19$ \underline{no} es residuo cuadrático módulo $859$.

\begin{ej}

	\begin{lstlisting}[language=GAP]
	
	
	
	gap> IsPrime(881);
	true
	gap> IsPrime(877);
	true
	gap> 877 mod 4;
	1
	gap> 877^440 mod 881;
	1
	
	
	\end{lstlisting}
	
	Según la Ley de Reciprocidad Cuadrática, en este caso debe darse que $881$ es un residuo cuadrático módulo $877$. Lo comprobamos con el Criterio de Euler y efectivamente:
	
	\begin{lstlisting}[language=GAP]
	
	gap> 881^438 mod 877;
	1
	
	
	
	
	\end{lstlisting}

\end{ej}

\section{Aplicaciones}

Veamos algunas aplicaciones de la Ley de Reciprocidad Cuadrática en teoría de números.

\subsection{Enteros que son suma de dos cuadrados}

Nuestro punto de partida es la observación de que conjunto de enteros que son suma de dos cuadrados es cerrado por multiplicación, esto es, dados dos números cada uno de los cuales es suma de dos cuadrados, su producto también podrá ser expresado como suma de dos cuadrados, como muestra la siguiente fórmula: $$(a^2+b^2)(c^2+d^2)=(ac-bd)^2+(ad+bc)^2$$ para cualesquiera enteros $a,b,c$ y $d$.

Parece lógico pues comenzar caracterizando los números primos que pueden escribirse como suma de dos cuadrados y como trivialmente $2=1^2+1^2$ nos centramos en los primos impares. 

\begin{teo}\label{primo}
	Si $p$ es un primo impar, entonces $p$ es suma de dos cuadrados si y sólo si $ p\equiv 1\ (mod\ 4)$.
	
\end{teo}
\begin{proof}[Demostración]
	La suficiencia es elemental, pues dado un número entero, su cuadrado siempre es congruente con 0 o 1 módulo 4 (de comprobación inmediata) y por tanto si un entero es suma de dos cuadrados será congruente con 0,1 o 2 módulo 4. Ahora al ser $p$ un primo impar no puede ser congruente con 0 ni 2 módulo 4 (pues si no sería par) y por tanto si $p$ es suma de dos cuadrados necesariamente debe ser $$ p\equiv 1\ (mod\ 4).$$ 
	
	Supongamos ahora que $ p\equiv 1\ (mod\ 4)$. Por (\ref{-1 res}) tenemos que $\left( \frac{-1}{p}\right)=1$ y por tanto existe un entero $u\in\mathbb{Z}$ tal que $ u^2\equiv -1\ (mod\ p)$.
	
	Consideremos el conjunto de enteros de la forma $x+uy$ tales que $x,y\in\mathbb{Z}$ y $0\leq x,y\leq \sqrt{p}$. 
	Como hay $([\sqrt{p}]+1)^2>p$ posibles pares $(x,y)$ distintos en estas condiciones, por el Principio del Palomar (también llamado \emph{Principio del Casillero}) necesariamente deben de haber $(x_1,y_1)\neq (x_2,y_2)$ tal que $$x_1+uy_1\equiv x_2+uy_2\ (mod \ p)$$ lo cual equivale a que $$x_1-x_2\equiv u(y_1-y_2)\ (mod \ p).$$
	
	Definimos $a:=x_1-x_2$ y $b:=y_1-y_2$. Se tiene que $|a|<\sqrt{p}$,  $|b|<\sqrt{p}$ y $a\equiv ub\ (mod\ p)$. Por tanto, teniendo en cuenta que $ u^2+1\equiv 0\ (mod\ p)$ y operando: $$a^2+b^2\equiv (u^2+1)b^2\equiv 0 \ (mod\ p).$$
	
	Finalmente, nos damos cuenta de que, por un lado $a^2+b^2<2p$ y por otro, al ser $(x_1,y_1)\neq (x_2,y_2)$ debe ser $0<a^2+b^2$ y sólo queda una posibilidad: $$a^2+b^2=p.$$
\end{proof}

Recordamos que como consecuencia del Teorema Fundamental de la Aritmética, todo número natural se puede escribir como producto de un número cuadrado y un entero libre de cuadrados.
En efecto, si $n=\prod_{i=1}^{k}p_i^{e_i}$ es la factorización de $n$ como producto de primos y reordenamos los primos $p_i$ de forma que los los $l$ primeros están elevados a una potencia impar y los demás están elevados a una potencia par, si escribimos $e_i=2f_i+1$ para $1\leq i\leq l$ y $e_i=2f_i$ para $l+1\leq i\leq k$, tendremos que $$n=\prod_{i=1}^{l}p_i^{2f_i+1}\prod_{i=l+1}^{k}p_i^{2f_i}=\prod_{i=1}^{l}p_i\prod_{i=1}^{k}p_i^{2f_i}=n_1n_2^2$$ siendo $n_1:=\prod_{i=1}^{l}p_i$ y $n_2:=\prod_{i=1}^{k}p_i^{f_i}$.

\begin{teo}
	Sea $n$ un número natural, que se escribe como $n=n_1n_2^2$ con $n_1$ libre de cuadrados (lo cual tiene queda justificado por el párrafo anterior). Entonces $n$ es suma de dos cuadrados si y sólo si $n_1$ no tiene ningún factor primo de la forma $p\equiv3\ (mod\ 4)$.
\end{teo}
\begin{proof}[Demostración]
	Supongamos que $n$ es suma de dos cuadrados y probemos que si cierto número primo $p$ divisor de $n$ es de la forma $p\equiv3\ (mod\ 4)$, entonces la máxima potencia de $p$ que divide a $n$ es par, lo cual implica que $p$ no será factor de $n_1$. 
	Supongamos pues que la máxima potencia de $p$ que divide a $n$ es $p^{2e+1}$ y probemos que entonces $p^{2e+2}$ también divide a $n$, lo cual será una contradicción. 
	
	Por inducción sobre $e$:
	
	Si $e=0$, tenemos que $p|n=a^2+b^2$ y entonces $p|a$ y $p|b$, pues si por ejemplo $p$ no dividiese a $b$, $b$ sería coprimo con $p$ y por ser $p$ primo existiría el inverso de $b$ en $\mathbb{Z}_p$ igual a $b^{-1}$. Entonces como $a^2+b^2\equiv0\ (mod\ p)$ esto implica que $$(ab^{-1})^2+1\equiv0\ (mod\ p),$$ es decir,  $$\left( \frac{-1}{p}\right)=1$$ lo cual contradice (\ref{-1 res}).
	
	Supongamos pues que esto es cierto para $e-1$ y probémoslo para $e$. 
	Supongamos que $p^{2e+1}|n$. Entonces $p|n$ y como acabamos de ver esto implica que $p|a$ y $p|b$ y por tanto $$\frac{n}{p^2}=\left(\frac{a}{p}\right)^2+\left(\frac{b}{p}\right)^2$$ pero entonces $p^{2e-1}|\frac{n}{p^2}$ y $\frac{n}{p^2}$ es suma de dos cuadrados, lo cual contradice nuestra hipótesis.
	
	Supongamos ahora que $n$ es tal que $n_1$ no contiene ningún factor primo de la forma $p\equiv3\ (mod\ 4)$ y veamos que $n$ es suma de dos cuadrados. 
	Por (\ref{primo}) cada factor primo de $n_1$ es suma de dos cuadrados, y como vimos al principio de la sección, el producto de dos números que son suma de dos cuadrados es también suma de dos cuadrados. Podemos por tanto asegurar que existen $a,b\in \mathbb{N}$ tales que $a^2+b^2=n_1$ y multiplicando a ambos lados por $n_2^2$ tendremos que $$n=n_1n_2^2=(a^2+b^2)n_2^2= (an_2)^2+(bn_2)^2.$$
\end{proof}
\begin{ej}
	
	Veamos usando GAP cómo determinar si los enteros $123456789$ y $987654321$ son suma de dos cuadrados. 
	
	\begin{lstlisting}[language=GAP]
	
	
	gap> PrintFactorsInt(123456789);
	3^2*3607*3803
	gap> 3607 mod 4;
	3
	
	
	\end{lstlisting}
	
	Vemos que como el entero libre de cuadrados de $123456789$ contiene el primo $3607$ de la forma $p\equiv3\ (mod\ 4)$, no puede expresarse como suma de dos cuadrados.
	
	\begin{lstlisting}[language=GAP]
	
	gap> PrintFactorsInt(987654321);
	3^2*17^2*379721
	gap> 379721 mod 4;
	1
	
	
	\end{lstlisting}
	
	Puesto que el único factor libre de cuadrados de $987654321$ es $379721$ y este es de la forma $p\equiv1\ (mod\ 4)$, se puede expresar como suma de dos cuadrados.
	
\end{ej}

\subsection{Ecuaciones en congruencias de segundo grado}

Dada una ecuación en congruencia lineal del tipo $ax+b\equiv0 \ (mod\ m)$ es muy sencillo determinar cuándo tiene solución (tal y como vimos en el primer tema), sin embargo, la situación se vuelve más compleja cuando estudiamos la ecuación $$ ax^2+bx+c\equiv 0\ (mod\ m).$$

Para determinar si esta ecuación tiene o no solución primero la transformamos en una ecuación más sencilla de la siguiente forma: $$ ax^2+bx+c\equiv 0\ (mod\ m)\Leftrightarrow 4a^2x^2+4abx+4ac\equiv 0\ (mod\ 4am)$$$$\Leftrightarrow (2ax+b)^2 \equiv b^2-4ac \ (mod\ 4am)$$

Por tanto, considerando la ecuación $$y^2\equiv b^2-4ac \ (mod\ 4am)$$ podemos determinar de forma eficiente si tiene o no solución con todos los resultados que hemos tratado en la primera sección, y en caso de tener que cierto $y$ es una solución, bastará con resolver la ecuación lineal $$2ax+b\equiv y\ (mod\ 4am).$$

\subsection{Números Primos}
En esta sección vamos a ver cómo usando la Ley de Reciprocidad Cuadrática podremos demostrar la existencia de infinitos números primos congruentes con $4$ módulo $5$. 

Comenzamos recordando este clásico teorema debido a Euclides.

\begin{teo}[Euclides]
	Existen infinitos números primos.
\end{teo}
\begin{proof}[Demostración]
	Supongamos que sólo hay un conjunto finito de números primos $\{p_1,\ldots,p_n\}$. 
	Sea $p:=p_1p_2\ldots p_n+1$. Tenemos que $p$ no es divisible por $p_i$ para ningún $p_i$ pues si no $p_i|(p- p_1p_2\ldots p_n)=1$. Por tanto $p$ es un entero mayor estricto que $p_i$ para $1\leq i\leq n$ que no es divisible por ningún número distinto de sí mismo y de $1$, lo cual equivale a decir que $p$ es un número primo tal que $$p\notin\{p_1,\ldots,p_n\}$$ y esto supone una contradicción.
\end{proof}

La siguiente proposición generaliza el teorema de Euclides:

\begin{prop}\label{euclides-ext}
	
	Sea $f\in\mathbb{Z}[x]$ un polinomio no constante y $$P_f:=\{p\mid \mbox{ p es primo y } p|f(n) \mbox{ para algún }n\in\mathbb{N} \}.$$ Entonces $P_f$ contiene infinitos enteros.
\end{prop}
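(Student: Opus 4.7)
The plan is to mimic the classical Euclidean argument used in the previous theorem, pero introduciendo una torsión para que el ``número construido'' esté forzado a ser divisible por un primo nuevo. Argumento por contradicción: supongamos que $P_f=\{p_1,\ldots,p_k\}$ es finito. El caso sencillo es $f(0)=0$, pues entonces $x\mid f(x)$ en $\mathbb{Z}[x]$ y por tanto, para cada primo $p$, $p\mid f(p)$, de donde $P_f$ contiene a todos los primos, contradicción inmediata con su finitud. Nos centramos pues en el caso $c:=f(0)\neq 0$.

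La idea clave es evaluar $f$ en un múltiplo apropiado de $c$ y de $N:=p_1\cdots p_k$ para poder factorizar $c$ explícitamente. Escribiendo $f(x)=c+a_1x+\cdots+a_dx^d$ y sustituyendo $x=cNt$, cada sumando con $j\geq 1$ es divisible por $c$, de modo que
$$f(cNt)=c\Bigl(1+N\cdot H(t)\Bigr),\qquad H(t):=\sum_{j=1}^{d}a_jc^{j-1}N^{j-1}t^j\in\mathbb{Z}[t].$$
Como $f$ no es constante, $H$ tampoco lo es, y por tanto $|1+NH(t)|\to\infty$ cuando $|t|\to\infty$; elegimos $t\in\mathbb{N}$ tal que $|1+NH(t)|>1$. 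Este entero admite entonces algún divisor primo $q$.

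El paso decisivo es observar que $q\notin\{p_1,\ldots,p_k\}$: en efecto, para cada $i$ se tiene $1+NH(t)\equiv 1\ (mod\ p_i)$ porque $p_i\mid N$, y si fuese $q=p_i$ tendríamos $1\equiv 0\ (mod\ p_i)$, absurdo. Por otra parte $q$ divide a $c(1+NH(t))=f(cNt)$, luego $q\in P_f$ por definición. Esto contradice que $P_f=\{p_1,\ldots,p_k\}$, y concluye la demostración.

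No anticipo verdaderas dificultades técnicas; el único punto donde hay que tener cuidado es en la factorización de $c$ en $f(cNt)$, ya que requiere separar el término independiente del resto y comprobar que el ``resto'' queda efectivamente en $\mathbb{Z}[t]$, así como justificar la existencia de un $t$ con $|1+NH(t)|>1$ usando que $H$ no es constante (lo que a su vez exige recordar que $f$ no lo era). Todo lo demás es la adaptación directa del truco de Euclides.
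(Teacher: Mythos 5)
Your proof is correct and follows essentially the same route as the paper's: there, too, one splits on $f(0)=0$, and for $c=f(0)\neq 0$ observes that $\frac{1}{f(0)}f\bigl(k f(0)p_1\cdots p_m\bigr)$ is an integer of the form $n\,(p_1\cdots p_m)+1$, hence coprime with every $p_i$, and for $k$ grande acquires a new prime divisor. Your write-up simply makes explicit the factorization $f(cNt)=c\bigl(1+N H(t)\bigr)$ and the choice of $t$ that the paper leaves implicit.
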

\begin{proof}[Demostración]

	Si $f(0)=0$ entonces $f$ no tiene término independiente y para cada primo $p$ se tiene que $p|f(p)$.
	
	Supongamos pues que $f(0)\neq 0$. Si sólo hubieran una cantidad finita pongamos $P_f=\{p_1,\ldots,p_m\}$ tendríamos que, para cualquier entero $k$, $\frac{1}{f(0)}f(kf(0)p_1\ldots p_m)$ es un entero de la forma $n\cdot (p_1\ldots p_m)+1$ para cierto $n\in\mathbb{N}$, lo cual muestra que no es divisible por ningún $p_i$, y si consideramos $k$ lo suficientemente grande tendrá que ser divisible por algún número primo $p_{m+1}$ mayor que $p_i$ para $1\leq i\leq m$, lo cual es una contradicción.
\end{proof}

Como consecuencia de esta proposición, si consideramos un número primo $p$ y el polinomio $f(x)=x^2-p$, tendremos que existen infinitos números primos $q$ tales que $q|n^2-p$ para algún $n\in\mathbb{N}$, es decir, existen infinitos números primos $q$ para los cuales $p$ es un residuo cuadrático.

Vamos ahora con el teorema principal de esta sección: 

\begin{teo} \label{4mod5}
	Existen infinitos números primos de la forma $p\equiv 4\ (mod\ 5)$.
\end{teo}
\begin{proof}[Demostración]
	
	Si definimos $f(x)=5x^2-1\in\mathbb{Z}[x]$ tenemos por la proposición (\ref{euclides-ext}) que existen infinitos números primos que dividan a algún número de la forma $5n^2-1$ con $n\in\mathbb{N}$. Si $p$ es un primo impar tal que $p|5n^2-1$ esto implica que $1^2\equiv 1\equiv 5n^2\ (mod\ p)$ y por tanto que $5n^2$ es un residuo cuadrático módulo $p$, es decir $$\left( \frac{5n^2}{p}\right)=1.$$
	Como obviamente $\left( \frac{n^2}{p}\right)=1$, por el teorema (\ref{prod-residuos}) tenemos que $$\left( \frac{5}{p}\right)=1$$ y como consecuencia de la Ley de Reciprocidad Cuadrática $$\left( \frac{p}{5}\right)=1.$$ Por tanto existe un entero $u\in\mathbb{Z}$ tal que $u^2\equiv  p (mod\ 5)$ y como los números impares cuadrados son o bien congruentes a $1$ o bien a $4$ módulo $5$, $$p\equiv  1\mbox{ o }4\ (mod\ 5)$$ y para terminar la prueba sólo falta comprobar que no puede haber una cantidad finita tal que $p\equiv  4\ (mod\ 5)$. Supongamos que hay una cantidad finita $\{p_1,\ldots,p_m\}$ de primos tales que $p\equiv  4\ (mod\ 5)$ y sea $n:=2p_1\ldots p_m$. Entonces puesto que $5n^2-1$ es impar cualquier divisor primo suyo será congruente con $1$ o $4$ módulo $5$, y como $5n^2-1$ es coprimo con $p_i$ para $1\leq i\leq m$, todos sus divisores primos tienen que ser congruentes con $1$ módulo $5$, pero esto es imposible, porque en ese caso tendría que ser $5n^2-1\equiv  1\ (mod\ 5)$, y se comprueba fácilmente que $5n^2-1\equiv  4\ (mod\ 5)$.
\end{proof} 

Teniendo en cuenta que todos los primos de la forma $p\equiv 4\ (mod\ 5)$ tienen por último dígito $9$, este teorema equivale al enunciado más "visual" que afirma que existen infinitos números primos cuyo último dígito sea $9$.

Antes de terminar la sección, cabría comentar que este es un caso particular del \emph{Teorema de Dirichlet de progresiones aritméticas}, el cual afirma que dados dos enteros positivos y coprimos $a,d\in\mathbb{N}$, existen infinitos números naturales $n\in\mathbb{N}$ para los cuales $a+n\cdot d$ es primo. 

La demostración del teorema de Dirichlet queda fuera de nuestro alcance debido a su gran complejidad, pero muchos casos particulares pueden demostrarse gracias a la Ley de Reciprocidad Cuadrática imitando la demostración del teorema (\ref{4mod5}).

\subsection{Soluciones enteras de ecuaciones elípticas}

Recordamos que una curva elíptica sobre un cuerpo de característica distinta de $2$ y de $3$ es (en su forma simplificada) el conjunto de soluciones de la ecuación $$y^2=x^3+ax+b$$ siendo $a$ y $b$ elementos de dicho cuerpo (i.e. la curva algebraica definida por dicha ecuación).

Una cuestión interesante sobre las curvas elípticas con coeficientes racionales (u otro cuerpo de característica $0$) consiste en determinar si tiene soluciones $(x,y)$ formadas por números enteros y en caso de tenerlas, determinar cuántas pueden haber. 

Sin ahondar demasiado en este asunto, mencionamos que se ha probado que sobre el cuerpo de los números racionales el conjunto de soluciones formadas por números enteros de una curva elíptica dada tiene que ser finito. 
Damos a continuación un ejemplo ilustrativo de cómo puede usarse la Ley de Reciprocidad Cuadrática para determinar si una curva no tiene soluciones enteras. 

\begin{prop}
	La ecuación elíptica $y^2+3=x^3-x$ no tiene soluciones enteras.
\end{prop}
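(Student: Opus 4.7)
Mi plan es combinar un análisis módulo $8$ con la caracterización, vía la Ley de Reciprocidad Cuadrática, de los primos impares $p$ para los que $-3$ es residuo cuadrático módulo $p$, y cerrar después por un análisis de casos sobre la clase de $m$ módulo $3$, donde $m=x/4$.

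Primero reduciría la ecuación módulo $8$. Como los cuadrados son $0,1,4 \pmod 8$, se tiene $y^2+3\in\{3,4,7\} \pmod 8$, mientras que un cálculo directo de $x^3-x \pmod 8$ para $x\in\{0,1,\dots,7\}$ da valores en $\{0,2,4,6\}$. El único residuo compatible es $4$, de modo que $y$ es impar y $x\equiv 4 \pmod 8$. Además, $y^2+3>0$ fuerza $x^3-x>0$, es decir $x\ge 2$ (si $x\le -2$ los tres factores de $x(x-1)(x+1)$ son negativos); combinando, $x=4m$ con $m$ entero positivo e impar.

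A continuación probaría el siguiente lema clave: todo primo impar $p\neq 3$ que divida a $y^2+3$ cumple $p\equiv 1 \pmod 3$. En efecto, un tal primo hace que $-3$ sea residuo cuadrático no nulo módulo $p$, es decir $\left(\frac{-3}{p}\right)=1$. Por multiplicatividad del símbolo de Legendre, $\left(\frac{-3}{p}\right)=\left(\frac{-1}{p}\right)\left(\frac{3}{p}\right)$; empleando el Corolario \ref{-1 res} para el primer factor y la Ley de Reciprocidad Cuadrática para el segundo, los dos signos $(-1)^{(p-1)/2}$ se cancelan y queda $\left(\frac{-3}{p}\right)=\left(\frac{p}{3}\right)$, condición que equivale a $p\equiv 1 \pmod 3$.

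Para cerrar, usando que $m$, $4m-1$ y $4m+1$ dividen a $y^2+3 = 4m(4m-1)(4m+1)$, analizaría $m\pmod 3$: exactamente uno de los tres enteros $m,4m-1,4m+1$ es positivo y congruente con $2 \pmod 3$ (a saber, $4m-1$ si $m\equiv 0$, $4m+1$ si $m\equiv 1$, el propio $m$ si $m\equiv 2$). Un entero positivo $\equiv 2 \pmod 3$ es coprimo con $3$ y no puede escribirse como producto de primos todos $\equiv 1 \pmod 3$ (cuyo producto sería $\equiv 1 \pmod 3$), así que necesariamente contiene un factor primo $p\equiv 2 \pmod 3$ con $p\neq 3$, contradiciendo el lema. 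El paso que concentra toda la fuerza del argumento es precisamente ese lema, único lugar en que interviene esencialmente la reciprocidad cuadrática; el punto más delicado es asegurar la positividad de $m$, pues sin ella la implicación ``entero $\equiv 2 \pmod 3$ tiene factor primo $\equiv 2 \pmod 3$'' no se sigue, y por eso el argumento de signo que fuerza $x\ge 4$ resulta indispensable.
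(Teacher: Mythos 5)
Tu demostración es correcta y sigue en esencia la misma ruta que la del trabajo: forzar mediante consideraciones de paridad (tú lo haces de golpe módulo $8$) que $y$ sea impar y $x\equiv 4\ (mod\ 8)$, localizar entre los tres factores impares de $y^2+3=4m(4m-1)(4m+1)$ uno positivo congruente con $2$ módulo $3$, extraer de él un primo impar $p\equiv 2\ (mod\ 3)$ que divide a $y^2+3$, y llegar a contradicción con $\left(\frac{-3}{p}\right)=\left(\frac{p}{3}\right)=1$. De hecho tu redacción es más cuidadosa que la del texto en dos puntos que éste deja implícitos: la positividad de los factores (sin la cual un entero $\equiv 2\ (mod\ 3)$ no tiene por qué poseer un factor primo $\equiv 2\ (mod\ 3)$, piénsese en $-1$ o en $-7$) y la deducción explícita de la identidad $\left(\frac{-3}{p}\right)=\left(\frac{p}{3}\right)$ a partir de la multiplicatividad del símbolo de Legendre, el Corolario (\ref{-1 res}) y la Ley de Reciprocidad Cuadrática.
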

\begin{proof}[Demostración]
	Supongamos que $(x,y)$ es una solución con $x,y\in\mathbb{Z}$. Puesto que $x^3-x$ es siempre par, $y^2+3$ tiene que ser par y por tanto $y$ tiene que ser impar. Supongamos que $y=2k+1$ para cierto $k\in\mathbb{Z}$, entonces $y^2+3=(4k^2+4k+1)+3=4(k^2+k+1)$ y puesto que $k^2+k$ siempre es par, deducimos que $4|y^2+3$ pero $8\not| y^2+3$.
	
	Si $x$ fuera impar, $x^2-1$ sería divisible por $8$ y por tanto $8|x(x^2-1)=y^2+3$, pero acabamos de ver que esto no es posible, luego $x$ tiene que ser par.
	
	Si $x$ es par, tanto $x-1$ como $x+1$ son impares, y puesto que $y^2+3=(x-1)x(x+1)$ deducimos que $4|x$ pero $8\not| x$.
	
	Como $x-1,x,x+1$ son tres enteros consecutivos, uno de ellos tiene que ser congruente con $2$ módulo $3$, y puesto que $(x-1)x(x+1)=4(x-1)\frac{x}{4}(x+1)$ y además por ser $x$ múltiplo de $4$ es $x\equiv \frac{x}{4}\ (mod\ 3)$, tendrá que darse que $(x-1),\frac{x}{4}$ o bien $(x+1)$ será congruente con $2$ módulo $3$. 

	Aquel que sea congruente con $2$ módulo $3$ tendrá que tener como factor algún primo congruente con $2$ módulo $3$ y puesto que  $(x-1),\frac{x}{4}$ y $(x+1)$ son todos impares ese primo $p$ también tendrá que ser impar.
	
	Hemos probado pues que existe un primo impar $p\equiv 2\ (mod\ 3)$ tal que $p|y^2+3$, lo cual equivale a afirmar que $$\left( \frac{-3}{p}\right)=1.$$
	
	Por la Ley de Reciprocidad Cuadrática, tenemos que $$\left( \frac{p}{3}\right)=\left( \frac{-3}{p}\right)=1$$ luego existe un $u\in\mathbb{Z}$ tal que $u^2\equiv p\ (mod\ 3)$ pero como para cualquier entero $u$, $u^2\equiv 0 \mbox{ o }1 \ (mod\ 3)$ esto implicaría que $$p\equiv 0 \mbox{ o }1 \ (mod\ 3),$$ lo cual es una contradicción.
\end{proof}

\end{document}